\newtheorem{thm}{Theorem}
\newtheorem{lem}{Lemma}
\title{Optimal strategy for trail running with nutrition and fatigue factors}
\author{
Bogna Jaszczak\thanks{Faculty of Pure and Applied Mathematics, Wroc{\l}aw University of Science and Technology, Poland} $^,$\footnote{Corresponding author: \texttt{bogna.jaszczak@pwr.edu.pl}}, \and
\L ukasz P\l ociniczak$^*$, 
}
\date{}
\begin{document}
\maketitle

\begin{abstract}
This paper presents an extension of Keller's classical model to address the dynamics of long-distance trail running, a sport characterized by varying terrains, changing elevations, and the critical influence of in-race nutrition uptake. The optimization of the generalized Keller's model is achieved through rigorous application of optimal control theory, specifically the Pontryagin Maximum Principle. This theoretical framework allows us to derive optimal control strategies that enhance the runner's performance, taking into account the constraints imposed by the changing terrain, nutritional dynamics, and the evolving fatigue factor. 

To validate the practical applicability of the model, simulations are performed using real-world data obtained from various mountain races. The scenarios cover various trail conditions and elevation profiles. The performance of the model is systematically evaluated against these scenarios, demonstrating its ability to capture the complexities inherent in long-distance trail running and providing valuable insight into optimal race strategies. The error in the total race-time prediction is of the order of several percent, which may give the runner a reliable tool for choosing an optimal strategy before the actual race. \\

\noindent\textbf{Keywords}: Keller's model, optimal control, trail running, data analysis\\

\noindent\textbf{AMS Classification}: 49N05, 37N35 
\end{abstract}

\section{Introduction}
Competitive running, as a dynamic and complex sport, has attracted significant attention from both athletes and researchers seeking to understand and optimize performance. Keller's model \cite{keller}, originally developed to describe the dynamics of competitive running, provides a valuable foundation for studying optimal strategies in race scenarios. In this paper, we extend Keller's classical model to encompass the challenges posed by long-distance races conducted on off-road terrain, further incorporating the crucial element of nutritional consumption during the race and the fatigue factor. Long-distance races, which often take place on uneven and challenging off-road terrains, introduce additional complexities to the dynamics of competitive running. Athletes must adapt their strategies to navigate varied surfaces, inclines, and environmental conditions, prompting the need for a comprehensive modeling approach that captures these intricacies. Furthermore, nutritional considerations play a crucial role in athlete performance during extended races \cite{burke2019contemporary}. Strategic intake of nutrition can affect energy levels, endurance, and overall race outcome. Incorporating nutritional aspects into the model allows for a more realistic representation of the dynamic interaction between the runner, the terrain, and the external environment \cite{costa2019nutrition}.

The problem of finding an optimal strategy for competitive running has been widely discussed in the literature. The first mathematical models of race strategies were based on Newton's law and appeared in the pioneering works of Hill almost a century ago (see, for example, \cite{hill1928air}). Later, these ideas were thrown into the optimal control framework by Keller, who provided a solution which, when fitted to the World Records data, gave an accurate description of the finish times up to medium distance races of 10 km \cite{keller}. The original model consisted of a distance minimization problem with state equations that describe force and energy dynamics along with some algebraic constraints. Subsequently, Woodside generalized this model for longer distances by adding the fatigue term to the energy equation \cite{woodside}. Behncke \cite{behncke1987optimization} studied the optimal control problem of minimizing the time to perform the race not only for running but also for swimming. The results obtained for running were further extended for varying slope races in joint work with Andreeva \cite{book}. Maroński and Samoraj \cite{maronski} studied the impact of variable slope on the optimal velocity strategy, considering the distances of 400 and 800 meters. Mathis \cite{mathis1989effect} proposed a different approach to modeling fatigue by limiting the propulsive force available to the runner based on the previously exerted effort. In the work of Pritchard, a thorough study of the wind resistance experienced by a sprinter has been carried out \cite{pritchard1993mathematical}. Pitcher \cite{pitcher2009optimal} provided a description of the optimal strategy in the two-runner competition and gave a detailed proof of optimality. Aftalion and Bonnans \cite{opt_run_str} included variations in the volume of oxygen used per unit of time and energy recreation when slowing down. They have used an interesting idea from Morton \cite{morton} to model energy consumption and evolution using a two-container hydraulic model. Furthermore, this model was later used in \cite{aftalion2017run} to explain optimal sprinter strategies during short races. In particular, the well-known phenomenon of slowing down in the final part of the race was accurately predicted. Similarly, a recent study \cite{aftalion2019optimizing} showed how the bending of the track influences the performance of a runner. On the other hand, in \cite{cook2023optimally} authors analyzed the effect of different nutrition strategies on the outcome of the race in long-distance events such as marathons. In addition to purely strategy-oriented research, running is also actively investigated from a biomechanical point of view. From many different approaches, we mention only one of the most important: the Spring-Loaded Inverted Pendulum (SLIP) model introduced by Blickhan in \cite{blickhan1989spring}. As the name indicates, this mathematical model describes a single leg movement during the stride. Further advances were made in \cite{mcmahon1990mechanics} where the influence of leg stiffness was analyzed. The dynamics of approximate solutions to the SLIP model was investigated in \cite{geyer2005spring} which was later made rigorous in \cite{plociniczak2020solution, wroblewska2023stability, okrasinska2020asymptotic}. 

Our aim in this work is to improve the original Keller model so that it can accommodate long-distance trail races. As mentioned above, due to the varying terrain, elevation, and weather conditions, these events require a completely different strategy, preparation, and mindset of the contestant than during flat-track short- and medium-distance races. To our knowledge, this is the first mathematical approach to model such a problem (however, see also \cite{book}). Based on previous works by different authors, we supplement the original Keller model with equations related to nutrition strategy, fatigue, and varying terrain. To rigorously analyze the optimality of the generalized Keller model in the context of long distance off-road running with nutritional considerations, we employ the Pontryagin Maximum Principle \cite{liberzon2011calculus}. This enables us to derive optimal control strategies that maximize the runner's performance while accounting for the constraints imposed by the terrain and nutritional dynamics. In addition to theoretical analysis, we provide real-world data computations to validate the practical applicability of the generalized model. Using empirical data from actual long-distance off-road races, we demonstrate the effectiveness of the proposed model in capturing the details of competitive running scenarios, yielding optimal strategies that athletes can employ to enhance their performance. For exemplary routes, we chose the tracks of five races that were part of the Golden Trail World Series in 2023. In these events, elite trail runners compete in races that differ in distance, elevation gain, scenery, and surface type. We consider marathons in mountains of an alpine character (Zegama Aizkorri Maraton and Marathon du Mont-Blanc), half-marathon (Mammoth), uphill race (Pikes Peak) and skyrunning event (Dolomyths Run). Such a variety enables us to test the model on a spectrum of different trail running events. We show that the model is sufficient to describe finish times with good accuracy for all tested scenarios. Good agreement of the real-life results with the predictions shows the robustness of the proposed model. 

This paper is structured as follows. In the next section we provide a detailed derivation of the original Keller model along with a generalization. We also nondimentionalize the model to simplify further analysis. The main optimal control problem is investigated in Section 3 in which we construct a solution trajectory and prove its optimality. In Section 4 we present results of fitting the model to the real-world data and provide some examples of optimal race strategies. 

\section{Model}
In this section, we derive and prepare the model for further analysis. First, we provide a detailed derivation of the generalization of the Keller model along with a discussion of nutrition and fatigue factors. Then, we scale all the equations to facilitate further optimal control analysis.  

\subsection{Derivation}
Keller's classical competitive running model is constructed with the objective of minimizing the race time $T$ on a track of length $D$ with a correct control of the propulsive force per unit mass $f=f(t)$. Of course, these two quantities are related by
\begin{equation}
D = x(T) = \int_0^T v(t) dt,
\end{equation}
where $v=v(t)$ is the velocity of the runner. Note that in \cite{opt_run_str} it has been shown that optimal solutions of the time minimization problem are also optimal for distance maximization in a given time. It is more convenient to consider the latter problem, and we will do so in the sequel. In the following, we will describe the origin and derivation of all the state equations of our model. All quantities appearing in the mathematical formulation are summarized in Tab. \ref{tab:Model} where we also give an example of the literature reference for their value.

\begin{table}
\centering
\begin{tabular}{cccc}
    \toprule
    symbol & description & value & reference \\
    \midrule
    $D$ & race distance & 20-45 km & \\
    $T$ & race time & 1.5 - 4 h & \\
    $v$ & runner's velocity & 1.5-4.5 m/s & \\
    $f$ & propulsion force per unit mass & $6.7 m/s^2$ & \cite{book}\\ 
    $\tau$ & coefficient of the internal resistive force & 0.67 s & \cite{opt_run_str}\\ 
    $c$ & coefficient of the drag force & $3.75 \times 10^{-3}$ 1/m & \cite{behncke}\\
    $E$ & runner's total energy & & \\
    $E_0$ & initial energy level & $1.4 - 2.5 \times 10^3$ m$^2$/s$^2$ & \cite{opt_run_str}, \cite{book}, \cite{pitcher2009optimal}\\
    $\hat{\sigma}$ & physiological energy supply & $27$ m$^2$/s$^3$ & \cite{behncke} \\ 
    $m$ & runner's body mass & 65 kg & \\
    $N$ & exogenous carbohydrate oxidation rate & & \\
    $k$ & inverse of the oxidation time scale & 1.353 $1/s$ & \\
    $N_0$ & initial oxidation rate & $2 \times 10^{-3}$ g/s& \\
    $M$ & maximal oxidation rate & $2.32 \times 10^{-2}$ g/s & \cite{book} \\ 
    $\zeta$ & energy produced from each gram of oxidized carbohydrates& $1.6736 \times 10^{4}$ J/g & \\ 
    $Q$ & energy loss rate due to fatigue & & \\
    $K$ & proportionality constant in the fatigue equation & $6 \times 10^{-5} 1/s$ & \\
    $g$ & gravitational acceleration & $9.81$ m/s$^2$ & \\
    $\alpha$ & track inclination angle & $(-\frac{\pi}{2}, \frac{\pi}{2})$& \\
    \bottomrule
\end{tabular}
\caption{Physical quantities of the model.}

\label{tab:Model}
\end{table}

Equations of motion can be derived from a simple energy and force balance argument stated diagrammatically as follows
\begin{equation}
\begin{split}
    \text{net force } &= \text{ propulsion } - \text{ gravity } - \text{ internal resistive force} - \text{ drag}, \\
    \text{energy change} &= \text{ supply } - \text{ work } + \text{ nutrition } - \text{ fatigue}.
\end{split}
\end{equation} 
In contrast to the classical model, here we have included terms corresponding to gravity, nutrition, and fatigue. In the analysis of optimal strategy for running a short- to medium-distance race on a flat track, they can be safely neglected. However, for longer races, especially on varied terrain, it is crucial to take into account accumulated fatigue and energy replenishment through nutrition. Mountain marathons are good examples of the importance of all three factors, and we include them in our model. Furthermore, we also include the quadratic drag force. Finally, the internal resistive force per unit mass is usually modeled to be proportional to the velocity. The force balance leads to the following equation of motion with an initial condition
\begin{equation}\label{eqn:VelocityEq}
\frac{dv}{dt} = f - g \sin\alpha - \frac{v}{\tau} - c v^2, \quad v(0) = 0,
\end{equation}
where $g$ is the gravitational acceleration, $\alpha=\alpha(x(t))$ is a given function representing the topography of the running route, $\tau$ is the inverse of the proportionality constant for the resistive force, and $c$ is the coefficient of the quadratic drag force. We note that all forces are stated \textit{per unit mass}. This equation has to be supplemented with the one determining the runner's displacement:
\begin{equation}\label{eqn:DistanceEq}
\frac{dx}{dt} = v, \quad x(0) = 0.
\end{equation}
Similarly to the above, the energy (per unit mass) equation can be written as
\begin{equation}\label{eqn:EnergyEq}
\frac{dE}{dt} = \sigma - fv + \frac{\zeta}{m} N(t) - Q, \quad E(0) = E_0,
\end{equation}
where $\sigma$ is the energy supply by breathing and circulation, $N=N(t)$ is the rate of energy replenishment (carbohydrate oxidation rate) due to nutrition. Furthermore, the constant $\zeta$ in \eqref{eqn:EnergyEq} represents how much energy is extracted from glucose oxidation. It is known that $1g$ of carbohydrate provides $4$ kcal = $16.74$ kJ of energy. Moreover, $Q$ is another state variable representing fatigue evolving according to
\begin{equation}\label{eqn:FatigueEq}
\frac{dQ}{dt} = K fv, \quad Q(0) = 0,
\end{equation}
where similarly as \cite{woodside} we assume that the fatigue increase is proportional to the work rate done by the runner with the proportionality constant $K>0$. We treat $\sigma$ as the energetic equivalent of $\dot{V}O2$. Similarly as in \cite{opt_run_str} we assume that $1$ liter of oxygen produces $20$ kJ energy. Under this assumption, we can easily calculate the value of $\sigma$ based on the runner's $\dot{V}O2_{\max}$, usually expressed in ml kg$^{-1}$ min$^{-1}$. Therefore,
\begin{equation}
\dot{V}O2_{\max}\;\frac{\text{ml}}{\text{kg} \times \text{min}} = \frac{1}{60} \dot{V}O2_{\max} \frac{\text{ml}}{\text{kg} \times \text{s}} = \frac{1}{60000} \dot{V}O2_{\max} \frac{\text{l}}{\text{kg} \times \text{s}},
\end{equation}
and
\begin{equation}
\sigma =  \frac{1}{6\times 10^4} \; \dot{V}O2_{\max} \; \frac{\text{l}}{\text{kg} \times \text{s}} \times \; 2 \times 10^4 \;\frac{\text{J}}{\text{l}} = \frac{1}{3} \; \dot{V}O2_{\max} \; \frac{\text{J}}{\text{kg} \times \text{s}} = \frac{1}{3} \; \dot{V}O2_{\max} \; \frac{\text{m}^2}{\text{s}^3}
\end{equation}
In real life, $\sigma$ varies with time - increases at the beginning of the exercise and decreases at the end. In \cite{opt_run_str}, $\sigma$ was described as a function of $E$, relating the drop to the depleted anaerobic energy stores. In our model, treating the subject of long races, we assume that $\sigma$ is constant over time. As proposed in \cite{minetti}, we consider two limitations on $\hat{\sigma}$, representing $\dot{V}O2_{\max}$:
\begin{itemize}
\item \textbf{Duration of the race}
\\ The longer the race, the lower the value of the available fraction of $\dot{V}O2$. Following \cite{minetti} and \cite{f_d} we use the formula:
\begin{equation}
    f_d = \frac{940 - \frac{T}{60}}{1000},
    \label{eq:fd}
\end{equation}
where $T$ is the race duration expressed in seconds. Note that this equation is not suitable for the ultramarathons lasting more than 940 minutes. 

\item \textbf{Altitude above the sea level}
\\ In high altitudes, the air has low density and it is harder to provide oxygen to cells. Based on \cite{minetti} and \cite{f_a}, we define a fraction of the metabolic power available at a given altitude as:
\begin{equation}
    f_a = 1 - 11.7 \cdot 10^{-9} a^2 - 4.01 \cdot 10^{-6} a,
    \label{eq:fa}
\end{equation}
where $a$ is an altitude above the sea level expressed in meters. In our model, we will take the average altitude above sea level as a value of $a$. 
\end{itemize}
Using Eq. \ref{eq:fd} and Eq. \ref{eq:fa} we can calculate the average value of $\dot{V}O2$ available during the race, denoted by $\sigma$:
\begin{equation}\label{eqn:sigmaFract}
\sigma = \hat{\sigma} \times f_d \times f_a,
\end{equation}
where the coefficients $f_{d,a}$ correspond to the duration of the race and the inlfuence of high altitudes.

\begin{table}
\caption{Suggested carbohydrates intake}
\centering
\begin{tabular}{cc}
    \toprule                  
    Duration of exercise & Suggested amount \\  
    \midrule
    30-75 min & small amounts / mouth rinse \\
    1-2 hours & 30 g/hour \\
    2-3 hours & 60 g/hour \\
    > 3 hours & 90 g/hour \\ 
    \bottomrule 
\end{tabular}
\caption{Suggested carbohydrates intake. Source: \cite{nutrition_intake}}
\label{tab:carbs_intake} 
\end{table}

As mentioned above, nutrition is an essential factor in long-distance races. Carbohydrate ingestion during prolonged moderate to high-intensity endurance exercise has been shown to increase capacity and performance \cite{carbohydrates_impact}. The recommendations for carbohydrate intake based on \cite{nutrition_intake} are presented in the Tab. \ref{tab:carbs_intake}. In \cite{nutrition_curves} the impact of the combined ingestion of fructose or sucrose with glucose on exogenous carbohydrate oxidation rates has been studied. The authors recommend using the mix of glucose with fructose sucrose, as it reduces gastrointestinal distress and increases the capacity for exogenous carbohydrate oxidation compared to glucose alone. The exogenous rate of carbohydrate oxidation during exercise with glucose ingestion (GLU), glucose and fructose ingestion (GLU + FRU) and with glucose and sucrose ingestion (GLU + SUC) was measured in \cite{nutrition_curves} as resembled a logistic curve. This is why the energetic dynamics of the exogenous carbohydrates oxidation rate $N(t)$ can be modeled by 
\begin{equation}\label{eqn:NutritionEq}
\frac{dN}{dt} = kN\left(1-\frac{N}{M}\right), \quad N(0) = N_0,
\end{equation}
where $k$ is the inverse of the time scale of the oxidation and $M$ represents the maximal limiting value of $N$. This logistic differential equation has been chosen as the simplest model to account for the saturation and exponential nutrition intake initially after ingestion. The solution is, of course,
\begin{equation}\label{eqn:Nutrition}
N(t) = \left(\frac{1}{M}+\left(\frac{1}{N_0} - \frac{1}{M}\right)e^{-k t} \right)^{-1}.
\end{equation} 
The numerical values of the model parameters were found by fitting the logistic curve to the data from \cite{nutrition_curves} and are presented in the Tab. \ref{tab:Model}. The comparison of the fitted curve and the experimental results is presented in Fig. \ref{fig: fittedNutritionCurve}. Table \ref{tab:carbs_oxidation_Nt} presents the amount of carbohydrates oxidized according to the nutrition function used in our model.  Based on those values, we can state that providing the runner follows the suggestion of carbohydrates intake during exercise and eats a snack right before the start, the assumption of continuous oxidation is reasonable. In a recent paper \cite{cook2023optimally} authors have considered an exponential model for the oxidation rate, that is, without the inclusion of saturation. Furthermore, they have additionally considered fatty acids as a source of energy and appropriate dynamics to model their oxidation. 

\begin{figure}
\centering
\includegraphics[width=0.8\textwidth]{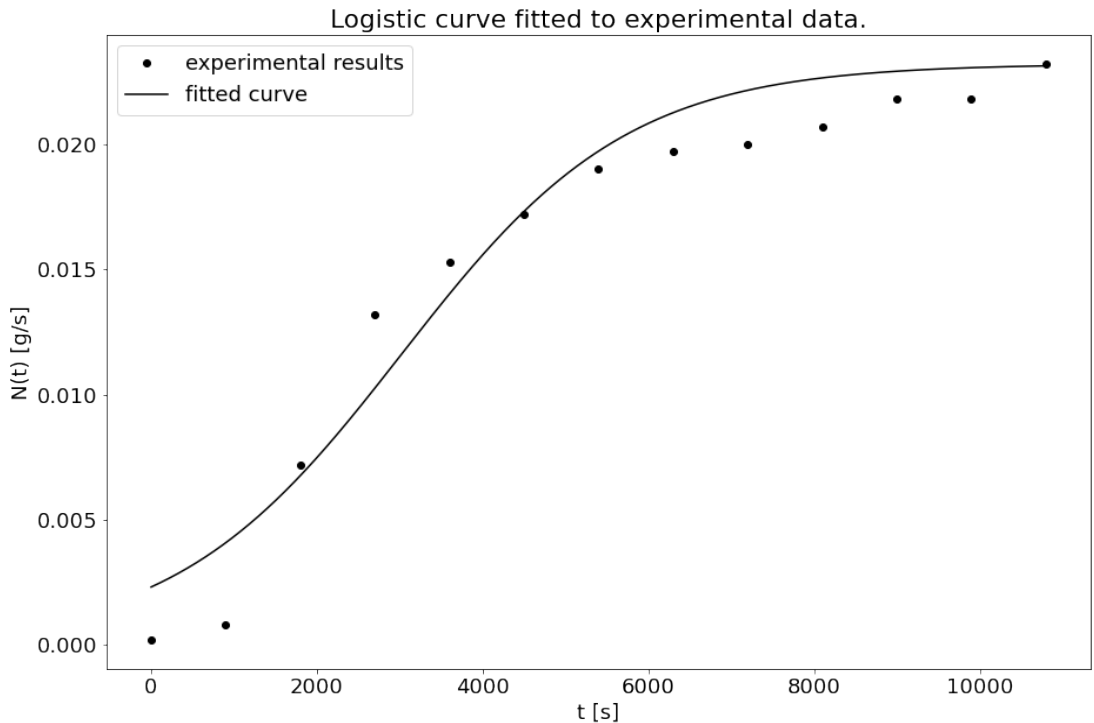}
\caption{Logistic curve fitted to experimental data. The value of the determination parameter $R^2$ is 0.9459.}
\label{fig: fittedNutritionCurve}
\end{figure}

\begin{table}
\centering
\begin{tabular}{ccc}
    \toprule                  
    Duration of exercise [h] &Amount of oxidized carbohydrates [g] & Average g/hour\\  [0.5ex] 
    \midrule
    1 & 26.1648 & 26.16 \\
    1.5 & 57.0495 & 38.03 \\
    2 & 95.1081 & 47.55\\
    3 & 177.2560 & 59.09 \\
    4 & 260.6370 & 65.16\\ [1ex]  
    \bottomrule
\end{tabular}
\caption{Amount of oxidized carbohydrates according to $N(t)$.}
\label{tab:carbs_oxidation_Nt} 
\end{table}

Finally, the model has to be closed with imposing constraints. First, the propulsive force is bounded due to physiological reasons
\begin{equation}\label{eqn:ConstraintForce}
0 \leq f \leq F,
\end{equation}  
with some $F > 0$. Second, the energy supply is also limited, hence
\begin{equation}\label{eqn:ConstraintEnergy}
0 \leq E \leq E_0.
\end{equation}
Therefore, our dynamical model of train running consists of the initial value problem \eqref{eqn:VelocityEq}, \eqref{eqn:DistanceEq}, \eqref{eqn:EnergyEq}, \eqref{eqn:FatigueEq}, supplemented with the rate of oxidation of carbohydrates \eqref{eqn:Nutrition}, along with the restrictions imposed \eqref{eqn:ConstraintForce}, \eqref{eqn:ConstraintEnergy}.

\subsection{Nondimensionalization}
\begin{table}
\centering
\begin{tabular}{ccc}
    \toprule                  
    Variable & Scale description & Symbol \\  
    \midrule
    $x$ & distance to be covered & $D = F \tau T$\\
    $v$ & velocity & $V = F\tau$ \\
    $E$ & initial energy level & $E_0$\\
    $f$ & maximal value of propulsive force & $F$ \\
    $t$ & time & $T$ \\
    $N$ & maximal oxidation rate & $M$ \\
    $Q$ & fatigue & $\Theta = KF^2\tau T$ \\
    \bottomrule 
\end{tabular}
\caption{Scales used in the nondimensionalization.}
\label{tab:scales} 
\end{table}

Table \ref{tab:scales} presents the natural scales of the variables used in the model. In addition, we need to calculate the scales for three other variables: $x$, $v$, and $Q$. We scale the velocity as 
\begin{equation}
v = Vv^{*} = F\tau v^{*}, \quad V = F \tau,
\end{equation}
where the asterisk is the new nondimensional variable. It is worth noting that in the standard Keller's model derived in \cite{keller} the quantity $F\tau$ is the limit velocity that can be achieved during the sprint races. Similarly, from \ref{eqn:FatigueEq} we obtain
\begin{equation}
Q = \Theta Q^{*} = KF^2\tau T Q^{*}, \quad \Theta = KF^2\tau T
\end{equation}
and
\begin{equation}
x = Dx^{*} = VT = F \tau T x^{*}, \quad D = F \tau T.
\end{equation}
We now can plug all nondimensional variables into our equations \eqref{eqn:VelocityEq}, \eqref{eqn:DistanceEq}, \eqref{eqn:EnergyEq}, \eqref{eqn:FatigueEq} and define the following constants
\begin{equation}
\begin{split}
    \iota = \frac{T}{\tau}, \quad \beta = \frac{gT}{F\tau}, \quad \gamma=cTF\tau, \quad \kappa = \frac{\sigma T}{E_0}, \quad \chi = \frac{F^2 \tau T}{E_0}, \\
    \phi = \frac{\zeta M T}{m E_0}, \quad \omega=\frac{KF^2 \tau T^2}{E_0},
\end{split}	
\end{equation}
so that the state equations and their constraints are now of the form
\begin{equation}\label{eqn:MainEqNondim}
\begin{cases}
    \dfrac{dv}{dt} = \iota(f - v) - \beta \sin \alpha -\gamma v^2, & v(0) = 0, \vspace{4pt}\\
    \dfrac{dx}{dt} = v, & x(0) = 0, \vspace{4pt}\\
    \dfrac{dE}{dt} = \kappa - \chi f v + \phi N - \omega Q, & E(0) = 1, \vspace{4pt}\\
    \dfrac{dQ}{dt} = f v, & Q(0) = 0, \vspace{4pt}\\ 
\end{cases}
\quad \text{with} \quad
0\leq f \leq 1, \quad 0 \leq E \leq 1,
\end{equation}
where, for greater readability, we have dropped the asterisks from the nondimensional variables. Numerical values of all appearing nondimensional constants used in our model are listed in the tab. \ref{tab:NondimConstants}. The example values taken for calculations were chosen to represent the 20 km race. A similar scaling has been used in \cite{pitcher2009optimal} for a model for a two-runner race. 
\begin{table}
\centering
\begin{tabular}{cc}
    \toprule                  
    Variable  & Value \\  
    \midrule
    $\iota$ & 8686.57 \\
    $\beta$ & 12718.69 \\
    $\gamma$ & 97.97 \\
    $\kappa$ & 62.86 \\
    $\chi$ & 70.02 \\
    $\phi$ & 13.91 \\
    $\omega$ & 24.45 \\
    \bottomrule 
\end{tabular}
\caption{Nondimensional constants used in our model.}
\label{tab:NondimConstants} 
\end{table}
As the value of $\gamma$ is much smaller than $\iota$ and $\beta$, we will neglect the effect of air resistance in the further calculations. 

\section{Optimal control}
The main problem to solve is choosing the propulsion force $f=f(t)$ in order to maximize the distance, that is the \textbf{objective}, in a given time horizon, hence
\begin{equation}\label{eqn:OptimalControl}
\text{choose the control } f \text{ such that }\int_0^1 v(t) dt \rightarrow \max,
\end{equation}
subject to the state equations with constraints \eqref{eqn:MainEqNondim}. This is an optimal control problem, and in this section, we will construct a solution and prove its optimality using Pontryagin's Maximum Principle (PMP) (for a thorough account on the optimal control theory, see \cite{liberzon2011calculus}). By Fillipov's theorem, such a problem with a convex and compact control set has a measurable solution \cite{liberzon2011calculus}. Similar problems, where the control variable appears linearly in the Hamiltonian and with pure state constraints, have been studied in \cite{maurer_linear_control}.

The starting point to solve the optimal control problem is defining the Hamiltonian augmented with the state constraint:
\begin{equation}\label{eqn:Hamiltonian}
H = \lambda_x v + \iota \lambda_v (f - v) + \lambda_E (\kappa - \chi fv + \phi N - \omega Q) + \lambda_Q(fv) + \eta E(1 - E),
\end{equation}
where $\eta$ is the penalty functions related to the state constraint and satisfies
\begin{itemize}
\item $\eta(t) \geq 0$,
\item $\eta(t) (E(t) - E^2(t)) = 0$ with $t \in [0, 1]$.
\end{itemize}
When the inequality in the corresponding constraint is strict, the penalty function is equal to zero. Otherwise, that is, on the boundary arc, we have $\eta \geq 0$. We put a multiplier related to both the upper and lower bound of $E$ as we will encounter the same behavior of the control variable $f$ in the boundary subarcs. Furthermore, the $\lambda(t)$ parameters associated with the state equations are called the \textbf{costate variables}. They are analogous to the Lagrange multipliers, but their values depend on time. A costate variable has the interpretation of being the shadow value of the state variable \cite{liberzon2011calculus}. Adjoint equations for the optimal control problem have the form
\begin{equation}\label{eqn:lambdaX}
\frac{d\lambda_x}{dt} = - \frac{\partial H}{\partial x} = \beta \lambda_v \cos \alpha \frac{d\alpha}{dx}
\end{equation}
\begin{equation}\label{eqn:lambdaV}
\frac{d\lambda_v}{dt} = -\frac{\partial H}{\partial v} = \iota \lambda_v + \chi \lambda_E f - \lambda_Q f - \lambda_x
\end{equation}
\begin{equation}\label{eqn:lambdaE}
\frac{d\lambda_E}{dt} = -\frac{\partial H}{\partial E} = \eta(2E-1)
\end{equation}
\begin{equation}\label{eqn:lambdaQ}
\frac{d\lambda_Q}{dt} = -\frac{\partial H}{\partial Q} = \omega \lambda_E,
\end{equation}
supplemented with the endpoint conditions
\begin{equation}
\lambda_x(1) = 1, \quad \lambda_v(1) = 0, \quad \lambda_E(1) = 0, \quad \lambda_Q(1) = 0.
\end{equation}
Note that changing the value of the state variables: $v$, $E$, and $Q$ at the final time will not increase the objective. Therefore, their shadow price at the final time is equal to $0$. The situation is different for $x$: our objective is to maximize its value at $t=1$, therefore, the increase in $x$ will cause a direct increase in the objective function by the same amount. We can notice that if we consider the route with the constant slope, that is, the flat route, we have $d\alpha/dx=0$ and, hence, $d\lambda_x/dt = 0$. As a result, from \eqref{eqn:lambdaX} and the endpoint conditions in such case $\lambda_x \equiv 1$ on $[0, 1]$.
For the sake of further analysis, we will point out the important properties of $\lambda_E$ and $\lambda_Q$.

\begin{lem}\label{lem:lambdas}
$\lambda_E(t)$ is positive and $\lambda_Q(t)$ is negative for $t\in [0, 1)$.
\end{lem}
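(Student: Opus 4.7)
The plan is to establish the sign of $\lambda_E$ first, directly from its adjoint equation \eqref{eqn:lambdaE} and the transversality condition $\lambda_E(1)=0$, and then to deduce the sign of $\lambda_Q$ from the linear coupling in \eqref{eqn:lambdaQ}.

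First I would integrate \eqref{eqn:lambdaE} backward from $t=1$ to obtain the representation
\[
\lambda_E(t) \;=\; \int_t^1 \eta(s)\bigl(1-2E(s)\bigr)\,ds,
\]
and read off its sign from the complementarity conditions. Since $\eta\geq 0$ and $\eta\cdot E(1-E)=0$, the measure $\eta(s)\,ds$ is supported on $\{s:E(s)\in\{0,1\}\}$; on $\{E=0\}$ the integrand equals $+\eta$, while on $\{E=1\}$ it equals $-\eta$. Thus $\lambda_E$ decomposes into a nonnegative contribution from lower-boundary arcs and a nonpositive one from upper-boundary arcs.

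The next step is to invoke the structural features of a long trail race: the upper constraint $E\leq 1$ is active only at the initial instant $t=0$, because once a nontrivial propulsion is exerted the work rate $\chi f v$ immediately dominates the supply $\kappa+\phi N-\omega Q$ and pushes $E$ below one, whereas the lower constraint $E\geq 0$ is reached on a terminal subinterval $[t_{*},1]$ along which $\eta>0$ (otherwise $E$ would drift off the boundary). Consequently, for any $t\in[0,1)$ the integrand above is nonnegative on $(t,1)$ and strictly positive on a set of positive measure, so $\lambda_E(t)>0$.

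With $\lambda_E>0$ on $[0,1)$ in hand, the statement for $\lambda_Q$ follows immediately from \eqref{eqn:lambdaQ} and $\lambda_Q(1)=0$: integration yields
\[
\lambda_Q(t)\;=\;-\int_t^1 \omega\,\lambda_E(s)\,ds \;<\; 0
\]
for every $t\in[0,1)$, since $\omega>0$. The main obstacle will be the structural claim about the optimal trajectory --- ruling out a re-entry to the upper boundary after $t=0$ and confirming that the lower boundary is actually reached before the finish with a strictly positive multiplier. Without these, one only obtains the weak inequalities $\lambda_E\geq 0$ and $\lambda_Q\leq 0$; elevating them to the strict ones claimed in the lemma hinges on the long-race solution structure that the paper develops in the remainder of Section~3.
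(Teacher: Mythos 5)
Your overall strategy --- backward integration of \eqref{eqn:lambdaE} to get
$\lambda_E(t)=\int_t^1 \eta(s)\bigl(1-2E(s)\bigr)\,ds$, sign analysis via the complementarity conditions, and then $\lambda_Q(t)=-\omega\int_t^1\lambda_E(s)\,ds<0$ --- is exactly the route the paper mentions and deliberately declines to take (``Both properties can also be shown directly by integrating \dots backward from 1. As it requires taking into account different scenarios, we will not pursue this''). The paper instead disposes of $\lambda_E>0$ in one line by the shadow-price interpretation: $\lambda_E(t)$ is the sensitivity of the optimal covered distance to the energy available at time $t$, and the value function is monotone increasing in $E_0$. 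The $\lambda_Q$ half of your argument coincides with the paper's.

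The genuine gap is your structural claim that the upper constraint $E=1$ is active only at the initial instant $t=0$. This contradicts the solution structure the paper itself establishes: Theorem~\ref{thm:optimalSolution} and the discussion of Fig.~\ref{fig:summaryFlat} state that for a flat route the maximal-energy boundary subarc $E\equiv 1$ occupies the \emph{majority} of the race (the interior arc regenerates $E$ back to $1$ after the initial sprint, and the runner rides the upper boundary until a few kilometers from the finish). On that arc the integrand in your representation equals $-\eta(s)\le 0$, with $\eta$ given by \eqref{eqn:etaValue} and generally nonzero, so for any $t$ preceding that arc your integral contains a nonpositive contribution over a set of large measure. Positivity of $\lambda_E(t)$ then requires comparing the magnitudes of the upper-boundary (negative) and lower-boundary (positive) contributions, which is precisely the ``different scenarios'' bookkeeping the paper avoids; your argument as written does not perform it, so it establishes $\lambda_E(t)>0$ only on the terminal portion of the race after the last upper-boundary arc. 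A secondary, smaller issue: on the terminal $E\equiv 0$ arc, $E$ is held on the boundary by the choice of the boundary control \eqref{eqn:fBoundary}, not by $\eta$, so ``otherwise $E$ would drift off the boundary'' does not by itself justify $\eta>0$ there; strict positivity of $\eta$ must be read off from \eqref{eqn:etaValue}. The clean repair is to abandon the integral route for the first claim and argue, as the paper does, from the monotone dependence of the value function on the energy state.
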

The first part of the lemma follows from the monotonicity of the value function $x(T)$ with respect to $E_0$. The second part results from \eqref{eqn:lambdaQ}, the endpoint condition, and the previous observation. It is in accordance with intuition: increasing fatigue will have a negative effect on the distance covered by the runner. An additional energy supply will allow the runner to run further. Both properties can also be shown directly by integrating \eqref{eqn:lambdaE} and \eqref{eqn:lambdaQ} backward from 1. As it requires taking into account different scenarios, we will not pursue this in this paper. 

Since the Hamiltonian is linear with respect to the control variable, there exists a \textit{switching function} that has the form \cite{liberzon2011calculus}:
\begin{equation}\label{eqn:switchingFunction}
\frac{\partial H}{\partial f} = \iota \lambda_v - \chi \lambda_E v + \lambda_Q v = \psi(t).
\end{equation}
From Pontryagin's Maximum Principle we can state what happens when $\psi(t)$ is positive or negative:
$$f^* = \begin{cases}
0, & \psi(t) < 0, \\
1, & \psi(t) > 0.
\end{cases}$$
PMP, however, does not provide us with information about what happens when $\psi(t) = 0$. If the zeros of the switching functions are isolated points, they represent the switching times between the \textit{bang-bang} subarcs. When $\psi(t) \equiv 0$ on an (open) subinterval of time, we have a \textit{singular control}. We may encounter two types of singular subarcs \cite{maurer_linear_control}:
\begin{itemize}
\item \textit{boundary subarc} - occurring when the state constraint is tight on the interval. In our case, this will be related to the situations when $E \equiv 0$ or $E \equiv 1$.
\item \textit{interior subarc} - occurring when the state constraints are not tight and $0<f<1$.
\end{itemize}
To sum, up the solution may consist of the following subarcs:
\begin{equation}\label{eqn:OptimalSolution}
f^* = \begin{cases}
    0, &\psi(t) < 0, \\
    f_{int}, & \psi(t) = 0, \; 0 < E < 1, \\
    f_{b,l}, & \psi(t) = 0, \; E=0, \\
    f_{b,u}, & \psi(t) = 0, \; E=1, \\
    F, & \psi(t) > 0.
\end{cases}
\end{equation}
The constructed candidate for the solution can be used to find the optimial one as the next result states.

\begin{thm}\label{thm:optimalSolution}
Assume that the solution of \eqref{eqn:OptimalControl} subject to \eqref{eqn:MainEqNondim} consists of:
\begin{itemize}
    \item maximal force subarcs entered at the beginning of the race and on uphill fragments when $\alpha > \alpha_0$ for some $\alpha_0 > 0$,
    \item singular subarcs on the remaining parts of the route,
\end{itemize}
then it is optimal. Moreover, in the case of a flat route, that is, when $\alpha \equiv 0$ the optimal solution has precisely the following structure: the initial maximal-force subarcs, the interior arc followed by the maximal-energy subarc, again the interior subarc and finally the zero-energy subarc. 
\end{thm}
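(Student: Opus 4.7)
The plan is to verify that the prescribed subarc structure is consistent with Pontryagin's Maximum Principle and then to conclude optimality by combining Filippov's existence theorem with essential uniqueness of a PMP-compatible trajectory. Because the Hamiltonian \eqref{eqn:Hamiltonian} is affine in $f$, the only admissible controls are $f^*\in\{0,1\}$ on bang arcs and the values $f_{int}$, $f_{b,u}$, $f_{b,l}$ pinned down by $\psi\equiv 0$ on singular and boundary arcs; the hypothesis of the theorem specifies when each regime is active, so PMP consistency together with existence of a measurable optimizer forces the constructed trajectory to be optimal.

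First I would verify the initial maximal-force arc: the terminal conditions $\lambda_v(1)=\lambda_E(1)=\lambda_Q(1)=0$, together with Lemma \ref{lem:lambdas} and $v(0)=0$, give $\psi(0)>0$, which is compatible with $f^*(0)=1$, and its duration is determined as the first zero of $\psi$. On each interior singular subarc the pair of conditions $\psi\equiv 0$ and $\dot\psi\equiv 0$ yields an algebraic relation solvable for $f_{int}$, which must then be checked to lie strictly in $(0,1)$. On a boundary subarc $E\equiv 1$ (resp.\ $E\equiv 0$) the condition $\dot E=0$ applied to \eqref{eqn:EnergyEq} gives a closed-form expression for $f_{b,u}$ (resp.\ $f_{b,l}$) in terms of $v$ and $N$, and I would confirm admissibility together with the sign of the penalty $\eta$ read off from \eqref{eqn:lambdaE}. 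For the uphill bang arcs the key ingredient is \eqref{eqn:lambdaX}: a sufficiently positive slope contributes to $\dot\lambda_x$, which through \eqref{eqn:lambdaV} pushes $\lambda_v$ upward and eventually forces $\psi>0$. The threshold $\alpha_0$ should emerge as the critical inclination at which the slope term dominates the contributions that had kept $\psi$ at zero on the preceding singular arc, and the bang regime should terminate precisely when $\alpha$ drops back below $\alpha_0$.

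The flat-route refinement follows by specialization: $d\alpha/dx\equiv 0$ yields $\lambda_x\equiv 1$, removing the uphill bangs entirely. Starting from $f^*(0)=1$ and $E(0)=1$, I would then track the sign of $\dot E$ to identify when the first interior singular arc hands off to the upper-boundary arc $E\equiv 1$, when a descending interior arc follows as the fatigue accumulation and the diminishing marginal benefit of nutrition tip the balance, and finally when the runner settles on $E\equiv 0$ until $t=1$. Ruling out other orderings uses the monotonicity of $N(t)$ given by \eqref{eqn:Nutrition} together with the sign information in Lemma \ref{lem:lambdas}. The main obstacle, in my view, is the bookkeeping at subarc junctions: ensuring continuity (or admissible atomic jumps) of the costates where $\eta$ may develop a Dirac component at entry to or exit from a boundary arc, and ruling out additional zero crossings of $\psi$ inside each bang arc so that the enumerated sequence is the only PMP-compatible structure. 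This is precisely the delicate point in the state-constrained, linear-in-control theory of \cite{maurer_linear_control} that I would devote the most care to.
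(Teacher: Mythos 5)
Your overall PMP framework matches the paper's, but your route to the final optimality claim diverges from the paper's and, as written, leaves the decisive step unproved. The paper's proof consists almost entirely of computing $f_{int}$, $f_b$ and the multiplier $\eta$, and then verifying the Generalized Legendre--Clebsch condition $\frac{\partial}{\partial f}\frac{d^2\psi}{dt^2}\geq 0$ on each type of singular subarc (interior, $E\equiv 0$, $E\equiv 1$), using Lemma \ref{lem:lambdas} for the signs of $\lambda_E,\lambda_Q$ and the assumed structure to guarantee that the $E\equiv 1$ arc is entered only for $\alpha\leq 0<\alpha_0$. Your proposal contains no second-order test at all: first-order PMP consistency ($\psi\equiv 0$ plus closed-form singular controls) cannot by itself distinguish a maximizing singular arc from a minimizing one, and supplying that distinction is exactly what the paper's GLC computation does. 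You try to compensate by invoking Filippov existence plus ``essential uniqueness'' of PMP-compatible trajectories, which would indeed be a legitimate alternative route---but you explicitly defer the uniqueness argument (junction conditions for $\eta$, possible atoms in the costate measure, ruling out other orderings of subarcs) as ``the delicate point'' without carrying it out, and for a state-constrained problem with an arbitrary slope profile $\alpha(x)$ that uniqueness claim is considerably harder than the local GLC check the paper performs. So there is a genuine gap: the step that actually certifies optimality is missing on both of your proposed routes.

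A smaller but concrete technical error: you assert that $\psi\equiv 0$ together with $\dot\psi\equiv 0$ ``yields an algebraic relation solvable for $f_{int}$.'' It does not. On an interior singular arc $\eta=0$ and $\dot\psi$ contains no $f$; the problem is of intrinsic order one, so $f$ first appears in $\ddot\psi$, and $f_{int}$ is obtained by solving $\ddot\psi=0$ (this is also why the relevant higher-order condition is $\partial_f\ddot\psi\geq 0$ rather than anything involving $\dot\psi$). Your treatment of the boundary controls via $\dot E=0$ and of the flat-route specialization $\lambda_x\equiv 1$ is consistent with the paper.
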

\begin{proof}
We can find the value of the control variable in the interior arc by noticing that $\psi(t) \equiv 0$ on the entire interval. We can differentiate the switching function until $f$ appears there explicitly and then solve for this variable. In our case, this means that we need to differentiate twice. As a result, we obtain:
\begin{equation}\label{eqn:fInterior}
    f_{int} = \frac{\iota^2 \lambda_x + 2 \iota \omega \lambda_E v + \beta \sin \alpha (\iota \chi \lambda_E - \iota \lambda_Q + 2 \omega \lambda_E)}{2 \iota^2 \chi \lambda_E - 2 \iota^2 \lambda_Q + \iota \omega \lambda_E}
\end{equation}
In case of the boundary arc we can notice that as when $E \equiv 0$ ($E \equiv 1)$ on the non-trivial interval of time, $dE/dt = 0$ on this interval. Using this fact, we get:
\begin{equation}\label{eqn:fBoundary}
    f_{b} = \frac{\kappa + \phi N - \omega Q}{\chi v}.
\end{equation}
We are also interested in knowing the value of the penalty function $\eta$ in the boundary subarcs. We can find its value from the fact that $\psi'(t) = 0$ there, hence
\begin{equation}
    \psi'(t) = \iota^2 \lambda_v - \iota \lambda_x + \chi \eta v - 2 \chi \eta E v + \chi \iota \lambda_E v + \chi \lambda_E \beta \sin \alpha + \omega \lambda_E v - \iota \lambda_Q v - \beta \lambda_Q \sin \alpha = 0.
\end{equation}
Collecting the $\eta$ terms yields 
\begin{equation}\label{eqn:etaValue}
    \eta = \frac{\iota \lambda_x + \iota \lambda_Q v + \lambda_Q \beta \sin \alpha - \iota^2 \lambda_v - \iota \chi \lambda_E v - \omega \lambda_E v - \chi \beta \lambda_E \sin \alpha}{\chi v - 2E\chi v}.
\end{equation}
To verify the singular arcs' optimality, we still need to check the Generalized Legendre-Clebsch condition \cite{robbins1967generalized}. As we deal with the problem of intrinsic order one, i.e. such that $q=1$ is the least integer for which
$$\frac{d^{2q}}{dt^{2q}} \frac{\partial H}{\partial f} = \frac{d^{2q}}{dt^{2q}} \psi(t)$$ 
depends explicitly upon $f$ \cite{lewis1980definitions}, GLC condition has the form:
$$\frac{\partial}{\partial f} \frac{d^2 \psi}{dt^2} \geq 0.$$
After the standard calculations we obtain:
\begin{equation}\label{eqn:GLC}
    \begin{split}
        \frac{\partial}{\partial f} \frac{d^2 \psi}{dt^2} = 2 \iota^2 (\chi \lambda_E - \lambda_Q) +  \iota \omega \lambda_E + 2 \chi^2 v^2 \eta + \chi \eta (\iota - 2 \iota E) \geq 0.
    \end{split}
\end{equation}
The condition is obviously satisfied for the interior arc, as $\iota$, $\omega$, $\chi$ and $v$ are positive, $\lambda_E \geq 0$, $\lambda_Q \leq 0$ and $\eta = 0.$  Similarly, all the summands are non-negative when $E \equiv 0$. However, we need to check what happens for the boundary subarc related to $E \equiv 1$. We can calculate the value of $\chi \eta (\iota - 2 \iota E)$ from Equation \eqref{eqn:etaValue} so that GLC reduces to:
\begin{equation}
    2 \chi^2 v^2 \eta + \frac{\iota^2}{v} \lambda_x -\frac{1}{v} \iota \beta  (\chi \lambda_E - \lambda_Q) \sin \alpha \geq 0.
\end{equation}
From the assumed form of the solution, this subarc is entered only for $-\frac{\pi}{2}<\alpha \leq 0 < \alpha_0$ such that the last summand is negative. What is important, the GLC condition is manifestly satisfied for the flat route case, when $\alpha \equiv 0.$	
\end{proof}

An exemplary illustration of the structure of the optimal solution is depicted in Fig. \ref{fig:summaryFlat} where we present the trajectories of velocity, energy, propulsive force, and fatigue for a 20 km run ($T \approx$ 1 h 37 min). Initially, the maximal force is used. Then the small drop in the energy level caused by the previous stage is quickly regenerated - it corresponds to the interior subarc. The majority of the race consists of the boundary subarc, when the maximal energy level is kept. The velocity is almost constant, but slightly decreasing. A few kilometers away from the finish line, the energy is gradually used up. It results in runner's acceleration. Finally, there is a short phase when the energy level is depleted ($E\equiv0$), but the runner still needs to finish the race. We can observe a significant drop in velocity. What is interesting, the fatigue is almost linear. The optimal strategy is then to approximately maintain the constant power throughout the race. 
\begin{figure}
\centering
\includegraphics[width=\textwidth]{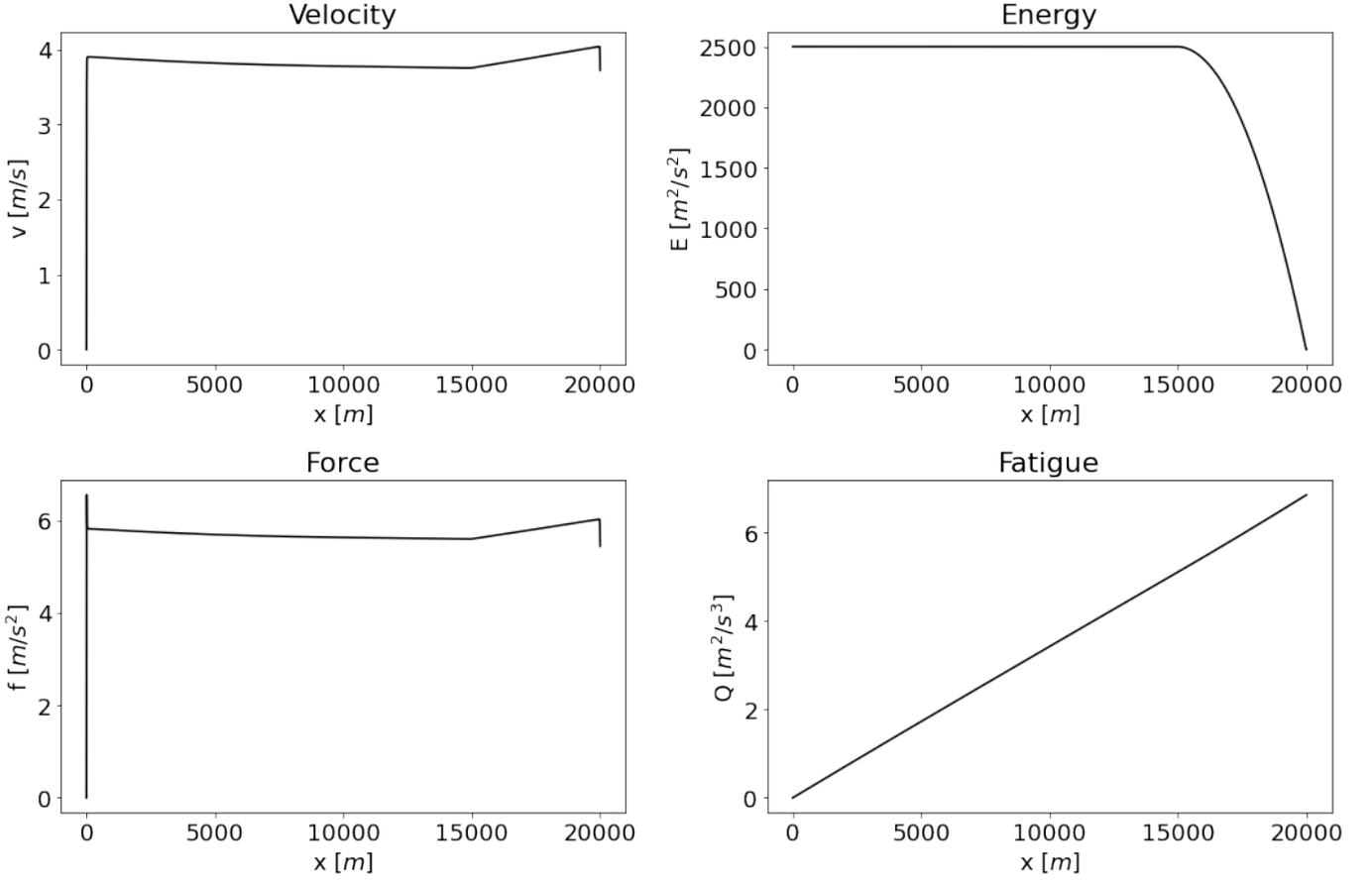}
\caption{Exemplary values of model variables for a 20 km race on a flat route.}
\label{fig:summaryFlat}
\end{figure}

\section{Real-world examples}
In real life, before the race, the runner knows only the distance and elevation profile. In case of competitions located in mountainous terrain, it is often hard to estimate the time of completing the route, especially when it is the first edition of given race and there is no possibility of comparison with previous-year results. Knowing the approximated time of completing the race is therefore important for different aspects of race logistics - especially planning the nutrition strategy. 

The optimal strategy to maximize the distance covered in a given time is the same as for the problem of minimizing the time required to run the given distance \cite{opt_run_str}. We can then utilize our model to approximate the time needed to finish the race, knowing the runner's $\dot{V}O2_{\max}$ and the route characteristic. In this section we will study the results for elite male runners during well-known mountain races, that were a part of Golden Trail World Series in 2023. In this paper we study only the men's results, however after adjusting the values of $F$, $E_0$, $m$ and $K$, the model can be successfully applied for females. 

We formulate our problem as:
\begin{equation}
\text{choose the control } f \text{ such that } \int_0^{T} dt \rightarrow \min
\end{equation}
subject to state equations \eqref{eqn:VelocityEq}, \eqref{eqn:EnergyEq}, \eqref{eqn:FatigueEq}, \eqref{eqn:distanceModified}, constraints \eqref{eqn:ConstraintForce}, \eqref{eqn:ConstraintEnergy} and the endpoint condition:
\begin{equation}\label{eqn:finalTimeXValue}
x(T) = D.
\end{equation}

\subsection{Data and parameters}
The elevation profile data was obtained from the \texttt{Strava} application. Using Strava API we were able to collect route characteristics from registered activities in the format of \texttt{.gpx} files. To reduce the complexity of numerical calculations, we averaged the slope value over short segments of $100-250$ m. It is a reasonable approach, as we consider only long distances: from 20 to 55 km, and this averaging does not lead to any loss of the overall characteristic of the route. Additionally, as \texttt{.gpx} files contain information about the horizontal distance covered, we replace \eqref{eqn:DistanceEq} with
\begin{equation}\label{eqn:distanceModified}
\frac{dx}{dt} = v \cos \alpha, \quad x(0)=0.
\end{equation}
The value of $T$ used in \eqref{eq:fd} during calculation of $\sigma$ was taken to be the current route record. Moreover, for $E_0$ we used $2 \times 10^3$ m/s$^2$.

\begin{table}
\centering
\begin{tabular}{cccc}
    \toprule                  
    Race & Distance [km] & Elevation gain/loss [m] & Route record \\  [0.5ex] 
    \midrule
    Zegama Aizkorri Maratón$^{*}$ & 41.5 & 2400/2400 & 3 h 36 min 40 s \\ 
    Marathon du Mont Blanc & 43.0 & 2500/2500 & 3 h 35 min 4 s \\
    Dolomyths Run & 21.0 & 1800/1800 & 1 h 51 min 36 s \\
    Pikes Peak Ascent & 20.5 & 2300/0 & 2h 0 min 20 s  \\
    Mammoth 26k & 27.5 & 1350/1350 & 1 h 54 min 48 s\\
    [1ex]  
    \bottomrule
\end{tabular}
\caption{Course characteristics.}
\label{tab:routeDetails} 
\end{table}

Table \ref{tab:routeDetails} presents the basic information about the races considered, such as distance, elevation gain and loss, and current route record. The information were gathered from the webpage \texttt{ratemytrail}. In case of some races, modifications are made to the routes so that the distance and elevation profile varies from year to year. In our case, this race is Zegama Aizkorri Maratón, denoted by $^*$. We decided to use the route from the year 2023. It is worth to mention that such a choice of running events yields a mixture of races of different kind: marathons, half-marathons, and one uphill. Therefore, we are able to test our model on a variety of different running scenarios. 

\subsection{Results}
All numerical calculations were performed using the GEKKO optimization suite \cite{gekko2018} \cite{gekko2022}, using IPOPT solver. The comparison of the obtained results with the current records is presented in the Tab. \ref{tab:calculatedRealComparison}. 

\begin{table}
\centering
\begin{tabular}{cccc}
    \toprule                  
    Race & Route record & Numerical result & Relative error \\  [0.5ex] 
    \midrule
    Zegama Aizkorri Maratón$^{*}$ &3 h 36 min 40 s & 3 h 35 min 34 s & 0.51 \%  \\ 
    Marathon du Mont Blanc & 3 h 35 min 4 s & 3 h 42 min 29 s & 3.45 \%  \\
    Dolomyths Run & 1 h 51 min 36 s & 1 h 40 min 24 s & 10.03 \%  \\
    Pikes Peak Ascent &  2h 0 min 20 s & 1 h 48 min 23 s & 9.93 \% \\
    Mammoth 26k &  1 h 54 min 48 s &  2 h 10 min 4 s & 13.3 \% \\
    [1ex]  
    \bottomrule
\end{tabular}
\caption{Comparison of route records and calculated finish times.}
\label{tab:calculatedRealComparison} 
\end{table}

Compared to flat-road competitions, such as athletic events or road races, it is much harder to estimate the finish time in trail races. In natural terrain, unexpected obstacles can occur, such as fallen trees, mud, or slipping rocks. Also, the weather plays a very significant role - rain or snow might cause severe or even extreme conditions, especially on the steep parts, both while running uphill and downhill. Additionally, the type of surface plays an important role; it is much easier to run on the gravel road than on the pavement consisting of large, irregular rocks. It is then hard to compare two route segments, even with the same slope, as depending on the surface type, the amount of time needed to accomplish them might be significantly different. 

With all that in mind, we can see that there is still good agreement between the real-life results and the finish times returned by the model. The estimated finish times are very reasonable in all the cases and might serve as a good guide for the runner. The parameters used for calculation lead to the most accurate results for the marathons taking place in the mountains of the alpine character, namely Zagama Aizkorri Maraton (Basque Mountains) and Marathon du Mont Blanc (Alps). Relative errors for these races do not exceed 5\%. The difference between the result returned by the model and the route record is greater in the remaining three cases. The prediction of Dolomyths Run is more optimistic than the real-life result. This can be explained with the very difficult conditions on the route: there are many switchbacks climbing up the mountains covered with the loose rocks. Such terrain prevents from running as fast as we could expect just by looking at the elevation profile. On the other hand, the real-life result of the Mammoth 26k race is better than the predicted one. Relatively small elevation gain and gravel roads make this route more convenient for running faster than the others listed. 

Considering the diversity of considered races, differing in distance, elevation gain, and surface type, as well as using generic physiological parameters in calculations, the finish times predicted by our model are very reasonable. The accuracy achieved is very good for the sport so strongly dependent on the weather and natural conditions. Additionally, we need to recall that the calculations are performed on the smoothed slope profile, with the data itself being burdened with measurement uncertainty. 

\begin{figure}
\centering
\includegraphics[width=0.8\textwidth]{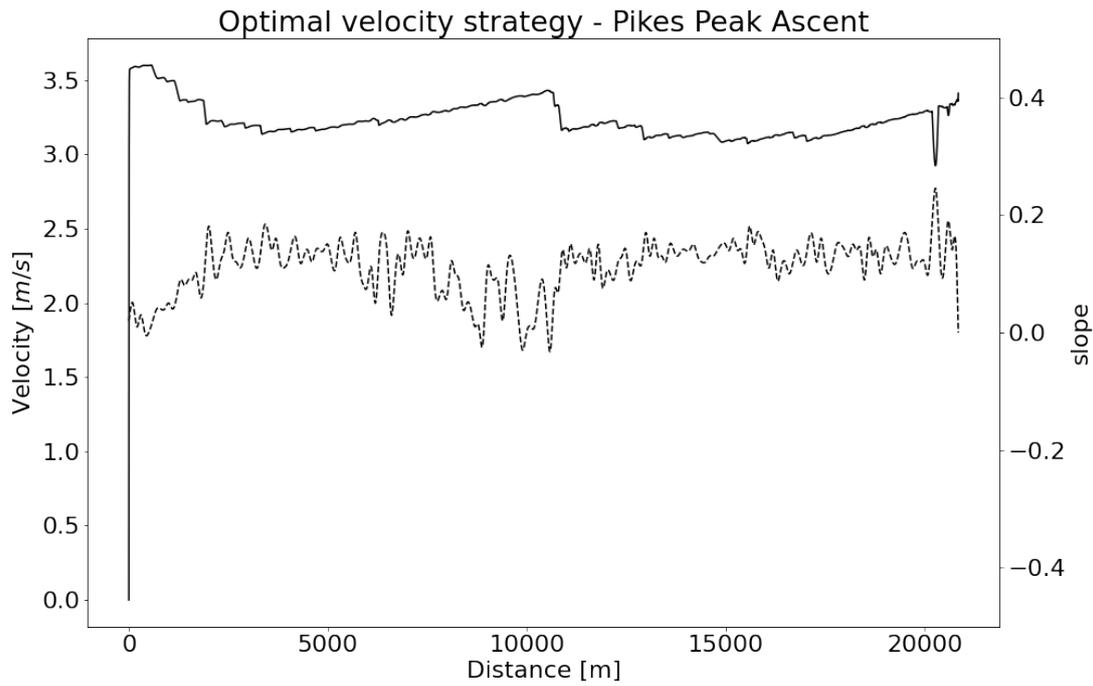}
\caption{Optimal velocity strategy for Pikes Peak Ascent race. Dashed line represents the slope expressed as $\tan \alpha$.}
\label{fig:vPikesPeak}
\end{figure}

\begin{figure}
\centering
\includegraphics[width=0.8\textwidth]{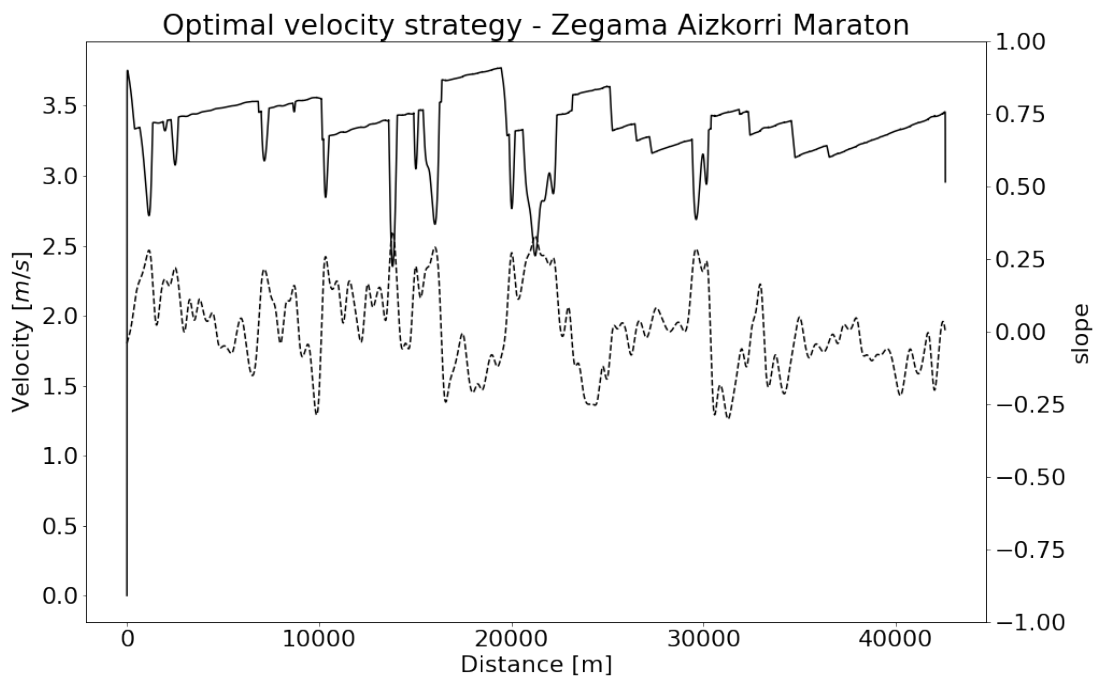}
\caption{Optimal velocity strategy for Zegama Aizkorri Maraton race. Dashed line represents the slope expressed as $\tan \alpha$.}
\label{fig:vZegama}
\end{figure}

Figures \ref{fig:vPikesPeak} and \ref{fig:vZegama} present the velocity strategies returned by the model for two particular routes. The effect of terrain shape is clearly visible: the smaller the slope variability, the more uniform the speed recommended for the race. What is interesting is that we can see that similar to the flat route there is an acceleration phase in the final part. 

In Fig. \ref{fig:allFromPikesPeak} and \ref{fig:allFromZegama} the values of state variables and the control variable obtained respectively for Pikes Peak Ascent and Zegama Aizkorri Maraton. We can clearly see that the maximal force subarc is entered at the beginning of the race and for relatively large positive values of the slope; otherwise, interior and boundary subarcs are observed. Note that this behaviour was proved in Theorem \ref{thm:optimalSolution} above. It is interesting that the energy profile in Fig. \ref{fig:allFromPikesPeak} strongly resembles that obtained for the flat route (see: Fig. \ref{fig:summaryFlat}). On the other hand, when the slope variability is higher and negative values also appear, the energy values oscillate between zero and the maximal value. What is common for these two very different races is the almost linear graph of fatigue. No matter the distance and slope profile, the optimal strategy appears to be to try to keep as uniform a power output as possible. 

\begin{figure}
\centering
\includegraphics[width=\textwidth]{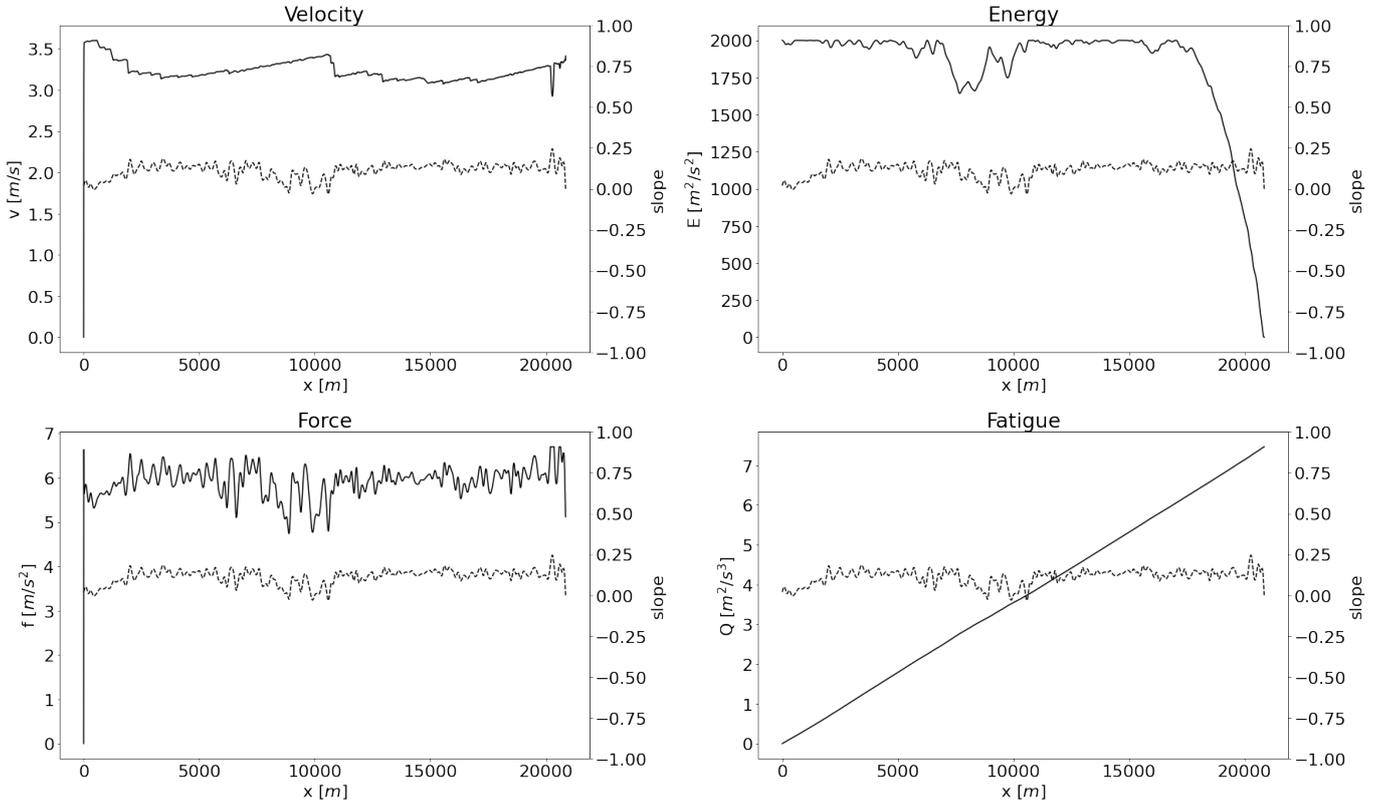}
\caption{Values of model variables for Pikes Peak Ascent.}
\label{fig:allFromPikesPeak}
\end{figure}

\begin{figure}
\centering
\includegraphics[width=\textwidth]{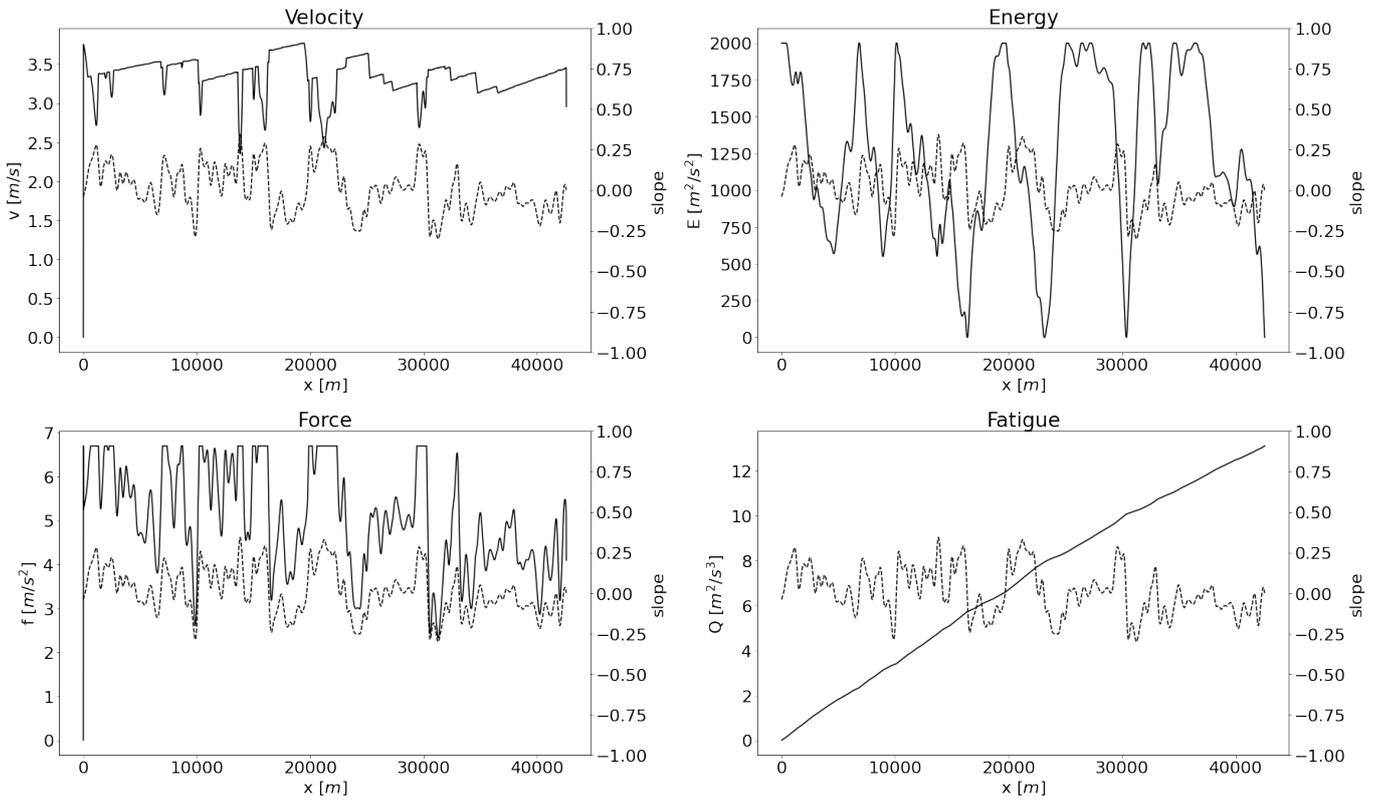}
\caption{Values of model variables for Zegama Aizkorri Maraton.}
\label{fig:allFromZegama}
\end{figure}

\section{Conclusion}
A suitable generalization of the classical Keller's model can be successfully applied to provide an optimal running strategy even for long races on a varying and difficult terrain. The factors included in our extension are nutrition and fatigue, which during long races are crucial for optimizing runner's performance. We have shown that, when applied to real-world data of several routes of various categories, our model provides an accurate estimation of the finishing time. Taking into account the difficulty of the terrain, as well as unpredictable weather conditions, we can conclude that the model performs very well for many different scenarios. 

Since our computations are generic, the results obtained might be improved, for example, by taking into account the following points.
\begin{itemize}
\item Providing detailed information about the runner, such as the exact value of their $\dot{V}O2_{\max}$ and body mass instead of some generic values. 
\item Including the type of surface in the calculations. Although such data are hard to obtain, taking into account the impact of the pavement will make the model much more realistic. It can be achieved by making $K$ the function of $x(t)$ or adding the correction factor to the velocity equation. 
\end{itemize}

\bibliographystyle{plain} 
\bibliography{bibliografia}

\end{document}